\newtheorem{observation}{Observation}[section]
\newtheorem{theorem}{Theorem}[section]
\newtheorem{lemma}{Lemma}[section]
\newtheorem{problem}{Problem}
\theoremstyle{remark}
\begin{document}

\begin{center}
\Large{\textbf{$D$-Antimagic Labelings of Oriented Star Forests}}\\[1em] 
    \normalsize{
        Ahmad Muchlas Abrar$^{1}$, Rinovia Simanjuntak$^{2,*}$\\[0.5em]
        $^1$Doctoral Program in Mathematics, \\Faculty of Mathematics and Natural Sciences, Institut Teknologi Bandung, \\Bandung, 40132, West Java, Indonesia \\[1em]
        $^2$Combinatorial Mathematics Research Group,\\ Faculty of Mathematics and Natural Sciences, Institut Teknologi Bandung,\\ Bandung, 40132, West Java, Indonesia\\[1em]
        \textit{*Corresponding author:rino@itb.ac.id}
    }
\end{center}

\section*{Abstract}
For a distance set $D$, an oriented graph $\overrightarrow{G}$ is $D$-antimagic if there exists a bijective vertex labeling such that the sum of all labels of $D$-out-neighbors is distinct for each vertex. This paper provides all orientations and all possible $D$s of a $D$-antimagic oriented star. We provide necessary and sufficient condition for $D$-antimagic oriented star forest containing isomorphic oriented stars. We show that for all possible $D$s, there exists an orientation for a star forest to admit a $D$-antimagic labeling.
\\[1em]
\noindent\textbf{Keyword:} $D$-antimagic labeling, oriented graph, oriented star, oriented star forest

\section{Introduction}

Graph labeling is one of the current highly researched topics in graph theory. 
One labeling that has gained significant attention is distance antimagic labeling, introduced by Kamatchi and Arumugam in 2013~\cite{Kamatchi-64-13}. For a simple, undirected graph \( G = (V, E) \), a bijection \( h: V(G) \to \{1, 2, \dots, n\} \) is called a \emph{distance antimagic labeling} if, for each vertex \( u \), the \emph{vertex weight} \( \omega(u) = \sum_{v \in N(u)} h(v) \) is distinct across all vertices; where $N(u)$ is the neighborhood of vertex $u$. 

Distance antimagic labeling has been studied extensively across various graph classes. Kamatchi and Arumugam demonstrated that paths \( P_n \), cycles \( C_n \) (for \( n \neq 4 \)), and wheels \( W_n \) (for \( n \neq 4 \)) are distance antimagic~\cite{Kamatchi-64-13}. Hypercubes \( Q_n \) (\( n \geq 4 \)) were later shown to admit distance antimagic labelings~\cite{Kamatchi cube}. Simanjuntak \textit{et al.} further extended the study to circulant graphs~\cite{sy2014distance} and various graph products, including cartesian, strong, direct, lexicographic, corona, and join products~\cite{simanjuntakprod, Handa}. 

Building on this foundation, Simanjuntak \emph{et al.} introduced the \( D \)-antimagic labeling as a natural generalization~\cite{AAC}. Instead of considering only neighbors, a nonempty distance set $D\subseteq \{0,1,2,\dots,\partial\}$, where $\partial=\max \{d(u,v)<\infty|u,v\in V(\overrightarrow{G})\}$, defines the \emph{\( D \)-neighborhood} of a vertex \( u \), which includes all vertices \( v \) such that \( d(v, u) \in D \). A bijection \( f: V(G) \to \{1, 2, \dots, n\} \) is a \textit{\( D \)-antimagic labeling} if the \textit{\( D \)-weight} \( \omega_D(u) = \sum_{v \in N_D(u)} f(v) \) is distinct for every \( u \in V(G) \). It is clear that a distance antimagic labeling is a special case of a \( D \)-antimagic labeling where \( D = \{1\} \).  

We adapted the aforementioned labeling notion to oriented graphs in ~\cite{Abrarlinearforest} by defining $D$-antimagic labeling for an oriented graph \( \overrightarrow{G}\). 
The $D$-out-neighborhood (or simply $D$-neighborhood) of a vertex $u$ in \( \overrightarrow{G}\) is defined as $N_D(u)=\{v\in V(\overrightarrow{G})\mid d(u,v)\in D\}$. A bijective vertex labeling $g$ such that the $D$-weight differs for each vertex of $\overrightarrow{G}$, is called a $D$-antimagic labeling of $\overrightarrow{G}$; and such a $\overrightarrow{G}$ is called a \textit{$D$-antimagic graph}. 

From ~\cite{Abrarlinearforest}, we have the following trivial, yet useful, lemma.
\begin{lemma}\label{lem:all are 0}
All oriented graphs are $\{0\}$-antimagic.
\end{lemma}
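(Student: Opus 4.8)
The plan is to unwind the definitions and observe that the statement is essentially immediate. The key observation is that for any oriented graph, the distance $d(u,v)$ equals $0$ precisely when $u=v$, since by the standard convention a vertex is at distance $0$ from itself and at distance at least $1$ from every other vertex reachable by a directed path. Consequently, for the distance set $D=\{0\}$ the $D$-out-neighborhood reduces to the singleton $N_{\{0\}}(u)=\{v\mid d(u,v)=0\}=\{u\}$ for every vertex $u$.

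With this reduction in hand, I would compute the $\{0\}$-weight directly. For an arbitrary bijective labeling $g\colon V(\overrightarrow{G})\to\{1,2,\dots,n\}$, the definition of the $D$-weight gives
\[
\omega_{\{0\}}(u)=\sum_{v\in N_{\{0\}}(u)} g(v)=g(u).
\]
Thus each vertex's $\{0\}$-weight is simply its own label.

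The final step is to invoke the defining property of a bijection: since $g$ assigns distinct labels to distinct vertices, the weights $\omega_{\{0\}}(u)=g(u)$ are pairwise distinct. Hence any bijection $g$ already serves as a $\{0\}$-antimagic labeling, and therefore every oriented graph is $\{0\}$-antimagic, as claimed.

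There is no genuine obstacle here; the only point requiring care is the justification that distance zero characterizes equality of vertices, which is why the $D$-neighborhood collapses to a single vertex. Once that is stated, the argument is forced, and indeed no special structure of the orientation or of the underlying graph is used — which is exactly why the lemma holds for \emph{all} oriented graphs.
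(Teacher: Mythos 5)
Your proof is correct and is precisely the trivial argument the paper has in mind: since $d(u,v)=0$ only when $u=v$, we get $N_{\{0\}}(u)=\{u\}$, so $\omega_{\{0\}}(u)=g(u)$ and any bijection works. The paper itself gives no proof (it cites the lemma as ``trivial'' from an earlier work), and your write-up supplies exactly that intended argument.
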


This paper focuses on the labeling properties of oriented star forests, which are collections of disjoint union of oriented stars. In Section \ref{sec:star}, we characterize all orientations and $D$s, such that an oriented star is $D$-antimagic. In Section \ref{sec:starforest}, provide necessary and sufficient condition for $D$-antimagic oriented star forest containing isomorphic oriented stars. We also show that for all possible $D$s, there exists an orientation for a star forest to admit a $D$-antimagic labeling. Throughout this paper, in the figures visualizing a $D$-antimagic labeling of a graph, the labels are represented by black numbers and the other colored numbers in brackets are the $D$-weights of each vertex.

\section{$D$-Antimagic Oriented Stars} \label{sec:star}

Since the diameter of an undirected star is 2, we only deal with the distance set $D$s where $max(D)\leq 2$ when we study $D$-antimagic oriented stars. 

For $n \geq 1$, in any orientation of a star $K_{1,n}$, we call the vertex with degree $n$ the \textit{center}, denoted as $v$. Each vertex with degree one is called a \textit{leaf}, denoted as $v_i$, for $1 \leq i \leq n$, where the index is arranged so that the first $t$ vertices are sources (with out-degree 1) and the remaining $n-t$ vertices are sinks (with in-degree 1). In this case, the $\{1\}$- and $\{2\}$-neighborhoods for each vertex are as follows:
\begin{itemize}
    \item \textbf{For the center:} $N_{\{1\}}(v) = \{v_i \mid t+1 \leq i \leq n\}$ and $N_{\{2\}}(v) = \emptyset$.
    \item \textbf{For the leaves that are sources:} $N_{\{1\}}(v_i) = \{v\}$ and $N_{\{2\}}(v_i) = \{v_j|t+1\le j\le n\}$.
    \item \textbf{For the leaves that are sinks:}  $N_{\{1\}}(v_i)=N_{\{2\}}(v_i) =\emptyset$.
\end{itemize}

In this section, we address all possible distance sets, that is, $$D\in \{\{0\},\{1\},\{2\},\{0,1\},\{0,2\},\{1,2\},\{0,1,2\}\}.$$ Since, by Lemma \ref{lem:all are 0}, any oriented star is $\{0\}$-antimagic, we start with $max(D)=1$.

\subsection{$D$-Antimagic Oriented Stars with $max(D)=1$}

Here we characterize $D$-antimagic oriented stars, where $max(D)=1$, that is, $D \in \{\{1\}, \{0,1\}\}$.

\begin{theorem}
The oriented star $\overrightarrow{K_{1,n}}$ is $\{1\}$-antimagic if and only if
\begin{enumerate}
    \item $n = 1$, or
    \item $n = 2$ and $t = 1$.
\end{enumerate}
\end{theorem}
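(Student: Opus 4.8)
The plan is to compute the $\{1\}$-weight of every vertex directly from the neighborhood description preceding the statement, and then read off the constraints forced by requiring all weights to be distinct. Writing $g$ for a candidate labeling, the three vertex types give $\omega_{\{1\}}(v) = \sum_{i=t+1}^{n} g(v_i)$ for the center, $\omega_{\{1\}}(v_i) = g(v)$ for every source leaf ($1 \le i \le t$), and $\omega_{\{1\}}(v_i) = 0$ for every sink leaf ($t+1 \le i \le n$), the last because such a leaf has empty $\{1\}$-neighborhood.

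The key structural observation, which does all the work in the \emph{only if} direction, is that the weight of a source leaf does not depend on $i$: all source leaves share the common weight $g(v)$, and all sink leaves share the common weight $0$. Hence if two or more source leaves exist, the labeling cannot be $\{1\}$-antimagic, forcing $t \le 1$; symmetrically, two or more sink leaves repeat the weight $0$, forcing $n - t \le 1$. Together these give $n \le 2$, which immediately eliminates every $n \ge 3$ and restricts the remaining analysis to $n \in \{1,2\}$, and, when $n = 2$, to $t = 1$.

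For the \emph{if} direction I would verify the surviving cases by hand. For $n = 1$ (either orientation) the two weights are $\{0, g(v)\}$ or $\{g(v_1), 0\}$, which are distinct because any label drawn from $\{1,2\}$ is nonzero. For $n = 2$ with $t = 1$ the three weights are the center's $g(v_2)$, the source leaf's $g(v)$, and the sink leaf's $0$; these are pairwise distinct since $g$ is a bijection onto $\{1,2,3\}$, so both nonzero labels differ and neither equals $0$. Thus in these cases \emph{every} labeling is $\{1\}$-antimagic.

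The only genuine care needed is bookkeeping: checking that the excluded configurations $n = 2,\ t \in \{0,2\}$ really do fail (they produce a repeated $0$ or a repeated $g(v)$, respectively) and that the boundary case $n = 1$ is admissible for both choices of $t$. I expect no substantial obstacle here, since the argument reduces to a pigeonhole count on repeated weights followed by a finite check; the emphasis will therefore be on presenting the weight computation cleanly and invoking the injectivity of $g$ to close the small cases.
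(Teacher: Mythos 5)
Your proposal is correct and takes essentially the same approach as the paper: the necessity rests on the identical observation that two or more source leaves would share the weight $g(v)$ and two or more sink leaves would share the weight $0$, with the paper handling $n\ge 3$ by pigeonhole, which is just the contrapositive of your inequalities $t\le 1$ and $n-t\le 1$. The only cosmetic difference is in sufficiency, where the paper exhibits specific labelings in a figure while you verify that every bijection works in the surviving cases, a harmless strengthening.
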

\begin{proof}
The sufficiency can be proved by constructing $\{1\}$-antimagic labelings for both cases, as provided in Figure \ref{fig:D=1}.
\begin{figure}[ht]
    \centering
    \includegraphics[width=0.6\linewidth]{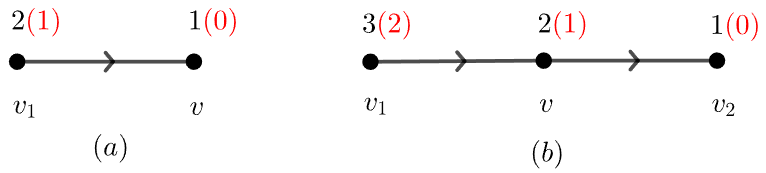}
    \caption{$\{1\}$-antimagic labelings of $\overrightarrow{K_{1,n}}$ where (a) $n = 1$ and (b) $n = 2$ and $t = 1$.}
    \label{fig:D=1}
\end{figure}

    For the necessity, let $\overrightarrow{K_{1,n}}$ be $\{1\}$-antimagic graph. Assume to the contrary the following cases:
    \begin{enumerate}
        \item $n = 2$ and $t = 0$, or
        \item $n = 2$ and $t = 2$, or
        \item $n \geq 3$.
    \end{enumerate}
    
    \textbf{Case 1.} If $n = 2$ and $t = 0$, then the center is a source and the two leaves $v_1, v_2$ are sinks. For any bijection, $\omega_{\{1\}}(v_1) = \omega_{\{1\}}(v_2) = 0$, a contradiction. 
    
    \textbf{Case 2.} If $n = 2$ and $t = 2$, then the center $v$ is a sink and the two leaves $v_1, v_2$ are sources. For any bijection $h$, $\omega_{\{1\}}(v_1) = \omega_{\{1\}}(v_2) = h(v)$, another contradiction. 
    
    \textbf{Case 3.} If $n \geq 3$, by the pigeonhole principle, there must be at least two leaves that are either sinks or sources. If the two leaves are sinks, both weights are zero in any bijection. If the two leaves are sources, their $\{1\}$-weights are the label of the center. These lead to the last contradiction.
\end{proof}

\begin{theorem}
    In any orientation, an oriented star is $\{0,1\}$-antimagic.
\end{theorem}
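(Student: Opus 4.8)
The plan is to exhibit, for every orientation of $\overrightarrow{K_{1,n}}$, one explicit bijection $g : V(\overrightarrow{K_{1,n}}) \to \{1, \dots, n+1\}$ whose $\{0,1\}$-weights are pairwise distinct, and to verify distinctness by a magnitude argument rather than a counting one. The first thing I would record is that, because $0 \in D$, every vertex's weight contains its own label: $N_{\{0,1\}}(u) = \{u\} \cup N_{\{1\}}(u)$, so $\omega_{\{0,1\}}(u) = g(u) + \sum_{w \in N_{\{1\}}(u)} g(w)$. Feeding in the neighborhood description from the start of the section (with the first $t$ leaves sources and the remaining $n-t$ sinks), the weights become $\omega_{\{0,1\}}(v) = g(v) + \sum_{i=t+1}^{n} g(v_i)$ for the center, $\omega_{\{0,1\}}(v_i) = g(v_i) + g(v)$ for each source-leaf, and $\omega_{\{0,1\}}(v_i) = g(v_i)$ for each sink-leaf.

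From these three formulas, two of the comparison types come for free. The sink-leaf weights are exactly their own (distinct) labels, so they are automatically pairwise distinct; the source-leaf weights are their own labels shifted by the common constant $g(v)$, so within that family they are automatically distinct as well. Hence the entire burden of the argument is to preclude collisions \emph{across} the three families, and this is precisely what the choice of labeling must be engineered to do.

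Concretely, I would set $g(v) = n+1$, give the $t$ source-leaves the labels $1, 2, \dots, t$, and give the $n-t$ sink-leaves the labels $t+1, t+2, \dots, n$. Then the sink-leaf weights lie in $\{t+1, \dots, n\}$ and are all $\le n$, the source-leaf weights lie in $\{n+2, \dots, n+1+t\}$ and are all $\ge n+2$, and the center weight equals $(n+1) + \sum_{k=t+1}^{n} k$. The sink and source families thus occupy disjoint integer ranges, and the center weight exceeds every sink-leaf weight because it is $n+1$ added to a sum that already contains that leaf's own label.

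The one genuinely load-bearing step — the place I expect to argue rather than just read off ranges — is separating the center weight from the source-leaf weights, since both can sit near $n+1+t$. Here I would observe that whenever a sink exists (that is, $t < n$) the label $n$ is among the sink labels, so $\sum_{k=t+1}^{n} k \ge n > t \ge i$ for every source index $i \in \{1, \dots, t\}$; consequently the center weight $(n+1) + \sum_{k=t+1}^{n} k$ never equals a source weight $(n+1) + i$. The degenerate orientations $t = 0$ (all sinks) and $t = n$ (all sources) should be checked separately, but they collapse immediately because one leaf family is empty: for $t=n$ the center weight is just $n+1$, distinct from all source weights $n+2, \dots, 2n+1$, and for $t=0$ there are no source weights at all. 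Assembling these cases yields pairwise-distinct $\{0,1\}$-weights for every $t \in \{0, \dots, n\}$, which is exactly the claim.
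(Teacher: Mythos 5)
Your proposal is correct and follows essentially the same route as the paper: you use the identical bijection (center $\mapsto n+1$, source-leaves $\mapsto 1,\dots,t$, sink-leaves $\mapsto t+1,\dots,n$) and verify distinctness by the same magnitude/ordering argument, with the degenerate orientations $t=0$ and $t=n$ treated as separate easy cases just as in the paper. If anything, your uniform treatment is slightly cleaner, since the paper's case split nominally omits $t=1$ (its Case 2 is stated for $2\le t\le n-1$), whereas your range argument covers every $t\in\{0,\dots,n\}$ explicitly.
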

\begin{proof}
Define a bijection $\alpha: V(\overrightarrow{K_{1,n}}) \rightarrow \{1, 2, \dots, n, n+1\}$ such that $\alpha(v_i) = i$, for $1 \leq i \leq n$, and $\alpha(v) = n+1$. 
We consider three cases based on the value of $t$,

\textbf{Case 1: $t = 0$.} The $\{0,1\}$-neighborhoods of all vertices are:  
$N_{\{0,1\}}(v) = V(\overrightarrow{K_{1,n}})$ and $N_{\{0,1\}}(v_i) = \{v_i\}$, for $1 \leq i \leq n$.    Thus, $\omega_{\{0,1\}}(v) = 
\frac{(n+2)(n+1)}{2}$ and $\omega_{\{0,1\}}(v_i) = \alpha(v_i) = i$, for $i = 1, \dots, n$. Since $\frac{(n+2)(n+1)}{2} > n$, each vertex has distinct $\{0,1\}$-weight.

\textbf{Case 2: $2 \leq t \leq n-1$.} The $\{0,1\}$-neighborhoods and $\{0,1\}$-weights of each vertex are:
\begin{description}
\item[For the center:] $N_{\{0,1\}}(v)= \{v\} \cup \{v_i \mid t+1 \leq i \leq n\}$ and $\omega_{\{0,1\}}(v) = (n+1) + \frac{(n-t)(n+t+1)}{2}$.
\item[For the leaves that are sources:] $N_{\{0,1\}}(v_i) = \{v_i, v\}$ and $\omega_{\{0,1\}}(v_i) = (n+1) + i$, for $1 \leq i \leq t$,
\item[For leaves that are sinks:] $ N_{\{0,1\}}(v_i) = \{v_i\}$ and $\omega_{\{0,1\}}(v_i) = i$, $t+1 \leq i \leq n$.
\end{description}

\textbf{Case 3: $t = n$.} Here we have $N_{\{0,1\}}(v) = \{v\}$ and $N_{\{0,1\}}(v_i) = \{v, v_i\}$, for $1 \leq i \leq n$. Thus, $\omega_{\{0,1\}}(v) = n+1$ and $\omega_{\{0,1\}}(v_i) = n + i + 1$, for $1 \leq i \leq n$. 

All the $\{0,1\}$-weights are distinct according to the following order:
$$\omega_{\{0,1\}}(v_{t+1}) < \omega_{\{0,1\}}(v_{t+2}) < \dots < \omega_{\{0,1\}}(v_{n}) < \omega_{\{0,1\}}(v_1) < \omega_{\{0,1\}}(v_2) <\dots < \omega_{\{0,1\}}(v_{t}).$$

Figure \ref{fig:D=0,1 K_{1,5}} depicts an example of this labeling. 

\end{proof}

\begin{figure}[ht]
      \centering      \includegraphics[width=0.35\linewidth]{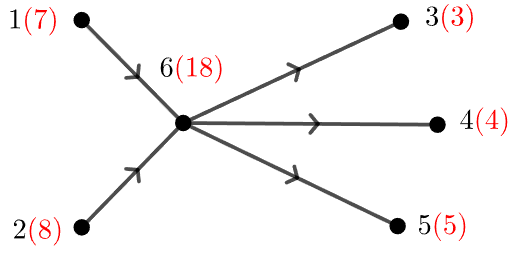}
      \caption{A $\{0,1\}$-antimagic labeling of a $\overrightarrow{K_{1,5}}$.}
      \label{fig:D=0,1 K_{1,5}}  
\end{figure}

\subsection{$D$-Antimagic Oriented Stars with $max(D)=2$}

This subsection characterizes $D$-antimagic oriented stars when $max(D)=2$, that is, 
$$D \in \{\{2\},\{0,2\},\{1,2\},\{0,1,2\}\}.$$ 

Note that distance two is achievable only between two leaves. Consequently, 

\begin{observation}\label{ob:center}
An oriented star admits distance two if and only if its center is neither a source nor a sink.    
\end{observation}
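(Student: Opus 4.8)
The plan is to prove both directions of the biconditional by analyzing the structure of directed paths of length two in an oriented star. The key structural fact I would establish first is that, because every leaf has degree one in the underlying undirected $K_{1,n}$, the only vertex that can serve as the interior vertex of any path of length two is the center $v$. Consequently, any ordered pair of vertices at directed distance two must consist of two distinct leaves $v_i, v_j$ joined by a directed path $v_i \to v \to v_j$; there is simply no other way to route a length-two walk through a star.

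For the forward direction, I would assume some pair of vertices is at directed distance two and invoke the structural fact to obtain a directed path $v_i \to v \to v_j$. The arc $v_i \to v$ shows the center has an incoming arc, while the arc $v \to v_j$ shows it has an outgoing arc; hence the center is neither a source (which would force out-degree $n$ and in-degree $0$, i.e.\ $t = 0$) nor a sink (in-degree $n$ and out-degree $0$, i.e.\ $t = n$).

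For the converse, I would use that a center which is neither a source nor a sink has $1 \le t \le n-1$, so there is at least one source leaf $v_i$ (yielding the arc $v_i \to v$) and at least one sink leaf $v_j$ (yielding the arc $v \to v_j$). Concatenating these gives the directed path $v_i \to v \to v_j$, and since $v_i$ and $v_j$ are non-adjacent leaves there is no arc directly between them, so no shorter directed path exists and $d(v_i, v_j) = 2$.

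The only point requiring care, and the main (mild) obstacle, is justifying that no other configuration produces distance two, which in turn means handling the degenerate case $n = 1$ explicitly: here there is a single leaf, no pair of distinct leaves exists, distance two is unattainable, and the center—having its single arc oriented one way or the other—is necessarily a source or a sink, so the equivalence holds vacuously. I would fold this degenerate case into the initial structural observation so that both directions go through uniformly for all $n \ge 1$.
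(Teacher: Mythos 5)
Your proof is correct and follows essentially the same route as the paper, which justifies the observation with the single remark that distance two is achievable only between two leaves (i.e.\ only via a directed path $v_i \to v \to v_j$ through the center). Your write-up simply fleshes out that remark with both directions and the degenerate case $n=1$, which the paper leaves implicit.
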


In this case, the number of leaves that are sources is $1\le t\le n-1$ and the $\{2\}$-neighborhood of each vertex in $\overrightarrow{K_{1,n}}$ is given by: $N_{\{2\}}(v) = N_{\{2\}}(v_i) = \emptyset$, for $t+1 \leq i \leq n$, and 
$N_{\{2\}}(v_i) = \{v_j \mid t+1 \leq j \leq n\}$, for $1 \leq i \leq t$. 
Clearly, the center and the leaves that are sinks have the same $\{2\}$-weights that leads to the following:

\begin{lemma}
In any orientation, an oriented star is not a $\{2\}$-antimagic.
\end{lemma}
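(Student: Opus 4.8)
The plan is to exhibit, for every orientation and every bijective labeling, two distinct vertices that are forced to share the same $\{2\}$-weight, thereby precluding any $\{2\}$-antimagic labeling. The natural candidate for a vertex of guaranteed weight is the center $v$: since the only vertices the center can reach are its leaves, each at out-distance $1$, we have $N_{\{2\}}(v)=\emptyset$ and hence $\omega_{\{2\}}(v)=0$ under any bijection $\alpha$. It therefore suffices to produce a second vertex whose $\{2\}$-weight is also $0$.

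First I would dispose of the degenerate orientations using Observation \ref{ob:center}. If the center is a source ($t=0$) or a sink ($t=n$), then the star admits no pair of vertices at distance two, so $N_{\{2\}}(u)=\emptyset$ for every $u$ and all $\{2\}$-weights equal $0$; hence any two vertices collide. In the remaining case $1\le t\le n-1$, the $\{2\}$-neighborhood description recorded just above gives $N_{\{2\}}(v_i)=\emptyset$ for each sink leaf $v_i$ with $t+1\le i\le n$, so $\omega_{\{2\}}(v_i)=0$. Because $n-t\ge 1$, at least one such sink leaf exists, and its weight coincides with $\omega_{\{2\}}(v)=0$. In every case we thus obtain two distinct vertices of equal $\{2\}$-weight.

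The conclusion is then immediate: a $\{2\}$-antimagic labeling requires all $\{2\}$-weights to be pairwise distinct, which the collision above forbids, so no such labeling exists. There is no genuine obstacle here; the only point demanding a little care is to treat the orientations with $t\in\{0,n\}$ separately, since for those the source-leaf description of $N_{\{2\}}$ is vacuous and the clean ``center versus sink leaf'' collision is replaced by the stronger fact that every $\{2\}$-weight is zero. One could even streamline the argument by noting that the center $v$ and every sink leaf always form a block of vertices all having $\{2\}$-weight $0$, and that this block has size at least two in each orientation.
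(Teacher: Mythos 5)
Your proof is correct and follows essentially the same route as the paper: the paper's (one-line) argument is precisely that the center and the sink leaves all have empty $\{2\}$-neighborhood and hence share the $\{2\}$-weight $0$. Your only addition is the explicit handling of the orientations with $t\in\{0,n\}$, which the paper leaves implicit (there every vertex has weight $0$, so the collision is even more immediate).
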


Although any oriented star is not $\{2\}$-antimagic, positive results exist for the other three distance sets. We characterize all in the following three theorems.

\begin{theorem}
For $n \geq 2$, an oriented star $\overrightarrow{K_{1,n}}$ is $\{0,2\}$-antimagic if and only if its center is neither a source nor a sink.
\end{theorem}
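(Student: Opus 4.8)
The plan is to separate the two implications, after first recording the three weight formulas the labeling must keep apart. Using $N_{\{0,2\}}(u)=\{u\}\cup N_{\{2\}}(u)$ together with the $\{2\}$-neighborhoods listed above, any bijection $f:V(\overrightarrow{K_{1,n}})\to\{1,\dots,n+1\}$ gives
\[
\omega_{\{0,2\}}(v)=f(v),\qquad \omega_{\{0,2\}}(v_i)=f(v_i)\ (t+1\le i\le n),\qquad \omega_{\{0,2\}}(v_i)=f(v_i)+S\ (1\le i\le t),
\]
where $S=\sum_{j=t+1}^{n}f(v_j)$ is the total label of the sink leaves. So the center and the sink leaves carry their own label as weight, while each source leaf carries its label shifted by the common amount $S$.

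For necessity I would observe that a $\{0,2\}$-antimagic labeling presupposes that $\{0,2\}$ is an admissible distance set, i.e. $\{0,2\}\subseteq\{0,1,\dots,\partial\}$, which forces $\partial\ge 2$. Distance two must therefore be realized in $\overrightarrow{K_{1,n}}$, and by Observation \ref{ob:center} this occurs exactly when the center is neither a source nor a sink (equivalently $1\le t\le n-1$).

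For sufficiency, assume $1\le t\le n-1$, so at least one source leaf and at least one sink leaf exist. By injectivity of $f$, the $n-t+1$ weights of the center and the sink leaves are pairwise distinct, and so are the $t$ source weights $f(v_i)+S$; the only possible collision is between a source weight and a center-or-sink weight, the latter being a bare label in $\{1,\dots,n+1\}$. The decisive step is to choose $f$ so that every source weight overshoots $n+1$: assigning the largest label $n+1$ to some sink leaf (possible since $n-t\ge 1$) makes $S\ge n+1$, so each source weight is at least $1+S\ge n+2$, while every center/sink weight is at most $n+1$. Distributing the remaining labels in any fashion then renders all $\{0,2\}$-weights distinct.

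I expect this separation to be the only real obstacle. The naive labeling $f(v_i)=i,\ f(v)=n+1$ from the previous theorems does not transfer: when $t=n-1$ the single source weight $1+S=1+n$ coincides with the center weight $n+1$. Loading a large label onto a sink so that the sink-sum $S$ lifts all source weights above the range of the plain labels is the one design-sensitive choice, and everything else follows from $f$ being a bijection.
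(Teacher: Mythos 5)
Your proposal is correct and follows essentially the same route as the paper: necessity comes from Observation \ref{ob:center} (if the center is a source or sink then $\partial\le 1$ and $\{0,2\}$ is not an admissible distance set), and sufficiency comes from a construction that places the largest label on a sink so that every source weight, being a label plus the sink-sum $S$, exceeds $n+1$ and thus clears all center/sink weights. In fact the paper's explicit bijection ($\beta(v)=t+1$, sources get $1,\dots,t$, sinks get $t+2,\dots,n+1$) is one instance of your more flexible scheme, and its ordering argument succeeds for exactly the overshoot reason you isolate.
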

\begin{proof}  
By Observation \ref{ob:center}, the proof is complete by constructing a $\{0,2\}$-antimagic labeling for an oriented star center that is neither a source nor a sink.

Define a bijection $\beta: V(\overrightarrow{K_{1,n}}) \to \{1, 2, \dots, n, n+1\}$ by $\beta(v) = t+1$, $\beta(v_i) = i$, for $1 \leq i \leq t$, and $\beta(v_i) = i+1$ for $t+1 \leq i \leq n$. 
Thus, the $\{0,2\}$-neighborhoods and $\{0,2\}$-weights are as follows:
\begin{description}
\item[For the center:] $N_{\{0,2\}}(v) = \{v\}$ and $\omega_{\{0,2\}}(v) = t+1$.
\item[For the sources:] $N_{\{0,2\}}(v_i) = \{v_i\} \cup \{v_j \mid t+1 \leq j \leq n\}$ and $ \omega_{\{0,2\}}(v_i) =  i + \frac{(n-t)(n+t+1)}{2}$, for $1 \leq i \leq t$.
\item[For the sinks:] $N_{\{0,2\}}(v_i) = \{v_i\}$ and $\omega_{\{0,2\}}(v_i) = i+1$, $t+1 \leq i \leq n$.
\end{description}

All the $\{0,2\}$-weights are distinct since $\omega_{\{0,2\}}(v_i)>\omega_{\{0,2\}}(v_{i'})>\omega_{\{0,2\}}(v)$ for $1\le i'<i\le n$. 

See Figure \ref{fig:D=0,2 K_{1,5}} for an example.
\end{proof}

\begin{figure}[ht]
    \centering    \includegraphics[width=0.4\linewidth]{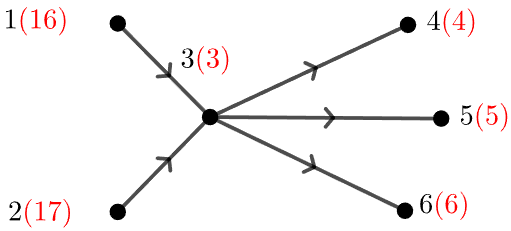}
    \caption{A $\{0,2\}$-antimagic labeling of a $\overrightarrow{K_{1,5}}$.}
    \label{fig:D=0,2 K_{1,5}}
\end{figure}


\begin{theorem}
For $n \geq 2$, an oriented star $\overrightarrow{K_{1,n}}$ is $\{1,2\}$-antimagic if and only if $n = 2$ and its center is neither a source nor a sink.
\end{theorem}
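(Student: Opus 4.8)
The plan is to begin by tabulating the $\{1,2\}$-neighborhoods and weights for each of the three vertex types, reading them off from the $\{1\}$- and $\{2\}$-neighborhood descriptions stated at the start of Section~\ref{sec:star}. Since in a star the sets $N_{\{1\}}(u)$ and $N_{\{2\}}(u)$ are disjoint, we have $\omega_{\{1,2\}}(u)=\omega_{\{1\}}(u)+\omega_{\{2\}}(u)$. Assuming first that the center is neither a source nor a sink (so $1\le t\le n-1$), this yields three weights: the center has weight $\sum_{j=t+1}^{n} g(v_j)$, the sum of the labels of all sink leaves; every source leaf $v_i$ ($1\le i\le t$) has weight $g(v)+\sum_{j=t+1}^{n} g(v_j)$; and every sink leaf ($t+1\le i\le n$) has weight $0$.

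For the necessity direction, I would exploit the key structural fact visible in these formulas: all source leaves share a single common weight, and all sink leaves share the common weight $0$, regardless of the labeling. Hence a $\{1,2\}$-antimagic labeling can exist only if there is at most one source leaf and at most one sink leaf, that is, $t\le 1$ and $n-t\le 1$. Combined with the hypothesis $n\ge 2$, these inequalities force $n=2$ and $t=1$, so the center has in-degree and out-degree both equal to $1$ and is therefore neither a source nor a sink, as required. I would then separately dispose of the two degenerate orientations in which distance two never occurs: if the center is a source ($t=0$) then all $n$ leaves are sinks of weight $0$, and if the center is a sink ($t=n$) then all $n$ leaves are sources of weight $g(v)$; either way, $n\ge 2$ produces a collision, ruling these cases out.

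For sufficiency, specializing to $n=2$ and $t=1$ (one source leaf $v_1$ and one sink leaf $v_2$), the three weights simplify to $\omega_{\{1,2\}}(v_2)=0$, $\omega_{\{1,2\}}(v)=g(v_2)$, and $\omega_{\{1,2\}}(v_1)=g(v)+g(v_2)$. Because all labels are positive, we always have $0<g(v_2)<g(v)+g(v_2)$, so \emph{every} bijection is $\{1,2\}$-antimagic; it then suffices to record one explicit labeling to complete the construction.

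I do not anticipate a genuine obstacle: the whole argument is driven by the single observation that source leaves are mutually indistinguishable and sink leaves are mutually indistinguishable under the $\{1,2\}$-weight. The only point requiring care is to handle the orientations $t=0$ and $t=n$ on their own, since there the $\{2\}$-contribution vanishes and the weights must be recomputed from the $\{1\}$-neighborhoods alone before the collision can be exhibited.
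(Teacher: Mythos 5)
Your proposal is correct and takes essentially the same route as the paper: both read off from the $\{1,2\}$-neighborhoods that all sink leaves share weight $0$ and all source leaves share the common weight $g(v)+\sum_{j=t+1}^{n} g(v_j)$, forcing at most one sink leaf and at most one source leaf, hence $n=2$ and $t=1$. If anything, your write-up is slightly more complete than the paper's, since you dispose of the degenerate orientations $t=0$ and $t=n$ explicitly and prove sufficiency by showing \emph{every} bijection works, whereas the paper appeals to Observation 2.1 and a single labeling given in a figure.
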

\begin{proof}
The $\{1,2\}$-neighborhoods of each vertex are: $N_{\{1,2\}}(v) = \{v_j \mid t+1 \leq j \leq n\}$, $N_{\{1,2\}}(v_i) = \{v\} \cup \{v_j \mid t+1 \leq j \leq n\}$, for $1 \leq i \leq t$, and $N_{\{1,2\}}(v_i) = \emptyset$, for $t+1 \leq i \leq n$.
If $\overrightarrow{K_{1,n}}$ is $\{1,2\}$-antimagic, to ensure that there is only one vertex with zero $\{1,2\}$-weight, then there should be only one sink. In this case, to ensure that there is only one vertex with $\{1,2\}$-weight equal to the sum of the labels of the center and the sink, there should also be only one source.
This implies $n = 2$ and Observation \ref{ob:center} completes the sufficiency.

For the construction of a $\{1,2\}$-antimagic labeling of the $\overrightarrow{K_{1,2}}$ whose center is neither a source nor a sink, refer to Figure \ref{fig:D=1,2}.
\end{proof}

\begin{figure}[ht]
    \centering
    \includegraphics[width=0.35\linewidth]{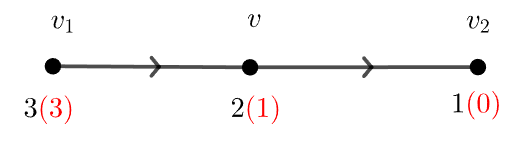}
    \caption{A $\{1,2\}$-antimagic labeling of the $\overrightarrow{K_{1,2}}$.}
    \label{fig:D=1,2}
\end{figure}


\begin{theorem}
For $n \geq 2$, an oriented star $\overrightarrow{K_{1,n}}$ is $\{0,1,2\}$-antimagic if and only if its center is neither a source nor a sink.
\end{theorem}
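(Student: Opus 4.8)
The plan is to follow the template set by the preceding $\{0,2\}$- and $\{1,2\}$-theorems: derive necessity directly from Observation \ref{ob:center} together with the admissibility requirement on the distance set, and then settle sufficiency by writing down an explicit bijection and checking that its $\{0,1,2\}$-weights are pairwise distinct. The one conceptual point to flag up front is that necessity here is \emph{not} a weight-collision argument but an admissibility argument, so I would make that explicit rather than searching for two equal weights.

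For necessity I would argue by contraposition. Suppose the center is a source or a sink. By Observation \ref{ob:center} the oriented star then admits no pair of vertices at distance two, so $\partial \le 1$ and hence $2 \notin \{0,1,\dots,\partial\}$. Thus $\{0,1,2\}$ is not an admissible distance set for this orientation, and $\overrightarrow{K_{1,n}}$ cannot be $\{0,1,2\}$-antimagic. It is worth stressing that in these two cases $N_{\{2\}}$ is empty for every vertex, so the $\{0,1,2\}$-weights collapse to the $\{0,1\}$-weights, which the earlier theorem shows to be distinct; the genuine obstruction is therefore the illegitimacy of $\{0,1,2\}$ as a distance set once distance two is unrealizable, not any coincidence of weights.

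For sufficiency, assume the center is neither a source nor a sink, so $1 \le t \le n-1$ and at least one source and one sink exist. I would first record the neighborhoods: since $N_{\{2\}}(v)=\emptyset$ and $N_{\{2\}}(v_i)=\emptyset$ for every sink, only the sources acquire the distance-two contribution $\{v_j \mid t+1 \le j \le n\}$. Writing $c=\gamma(v)$ and $S=\sum_{j=t+1}^{n}\gamma(v_j)$ for a bijection $\gamma$, the weights are $\omega_{\{0,1,2\}}(v_i)=\gamma(v_i)$ for each sink, $\omega_{\{0,1,2\}}(v)=c+S$ for the center, and $\omega_{\{0,1,2\}}(v_i)=\gamma(v_i)+c+S$ for each source. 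I would then reuse the labeling $\alpha$ from the $\{0,1\}$-theorem, namely $\alpha(v_i)=i$ for $1\le i\le n$ and $\alpha(v)=n+1$; under it the sink weights are exactly $t+1,\dots,n$, the center weight is $(n+1)+S$, and the source weights are $(n+1)+i+S$ for $1\le i\le t$.

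The only thing left to verify — and the step I expect to carry the argument — is the separation of the three weight blocks, which reduces to a short chain of inequalities. Every sink weight is at most $n$, the center weight satisfies $(n+1)+S \ge n+2 > n$ because $S\ge 1$, and every source weight equals the center weight plus a positive label and so strictly exceeds it; hence for any sink $v_i$ and source $v_{i'}$,
$$\omega_{\{0,1,2\}}(v_i)\;\le\; n \;<\; (n+1)+S=\omega_{\{0,1,2\}}(v)\;<\;\omega_{\{0,1,2\}}(v_{i'}).$$
Distinctness within each block is immediate from the distinctness of the labels, so all $n+1$ weights are distinct. I anticipate no real difficulty beyond confirming $c+S>n$, which holds uniformly for every admissible $t$; in particular, unlike the $\{1,2\}$ case there is no additional structural restriction such as $n=2$, and the construction succeeds for the entire range $1\le t\le n-1$.
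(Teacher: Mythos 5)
Your proposal is correct and takes essentially the same approach as the paper: necessity follows from Observation \ref{ob:center} (distance two is inadmissible when the center is a source or a sink), and sufficiency is settled by an explicit bijection whose $\{0,1,2\}$-weights fall into three ordered blocks, sinks $<$ center $<$ sources. The only difference is cosmetic---you reuse the labeling $\alpha$ from the $\{0,1\}$-theorem (center labeled $n+1$) whereas the paper's $\beta$ assigns the center $t+1$ and shifts the sink labels up by one---and your block-ordering verification is exactly the argument the paper compresses into ``it can be verified.''
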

\begin{proof}
Due to Observation \ref{ob:center}, the proof completes by constructing a $\{0,1,2\}$-antimagic labeling of a $\overrightarrow{K_{1,n}}$ whose center is neither a source nor a sink.
Define a bijection $\beta: V(\overrightarrow{K_{1,n}}) \to \{1, 2, \dots, n, n+1\}$ such that: $\beta(v) = t+1$ and
$$
\beta(v_i)=\begin{cases}
			i, & \text{for } 1 \leq i \leq t,\\
            i+1, & \text{for } t+1 \leq i \leq n.
		 \end{cases}
$$

Thus, the $\{0,1,2\}$-neighborhood and $\{0,1,2\}$-weight of each vertex are:
\begin{description}
\item[For the center:] $N_{\{0,1,2\}}(v) = \{v\} \cup \{v_j \mid t+1 \leq j \leq n\}$ and $\omega_{\{0,1,2\}}(v)=(n-t)\frac{n+t+3}{2}+t+1$.
\item[For the sources:] $N_{\{0,1,2\}}(v_i) = \{v\} \cup \{v_i\} \cup \{v_j \mid t+1 \leq j \leq n\}$ and $\omega_{\{0,1,2\}}(v_i)=i+t+1+(n-t)\frac{n+t+3}{2}$, $1\le i \le t$.
\item[For the sinks:] $N_{\{0,1,2\}}(v_i) = \{v_i\}$ and $\omega_{\{0,1,2\}}(v_i) =i+1$, $t+1 \leq i \leq n$.
\end{description}

It can be verified that the $\{0,1,2\}$-weights of all vertices are distinct, where: $\omega_D(v_i)>\omega_D(v_{i'})$, for $i>i'$ and $\omega_D(v)\ne \omega_D(v_i)$, for $1\le i\le n$. 

Figure \ref{fig:D=0,1,2 K1,5} provides an example of this labeling.
\end{proof}

\begin{figure}[ht]
    \centering    \includegraphics[width=0.35\linewidth]{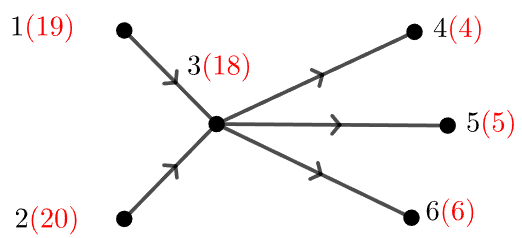}
    \caption{A \{0,1,2\}-antimagic labeling of a $\overrightarrow{K_{1,5}}$.}
    \label{fig:D=0,1,2 K1,5}
\end{figure}

\section{$D$-Antimagic Labelings of Oriented Star Forests} \label{sec:starforest}

We define a \textit{star forest} as a disjoint union of at least two stars. We begin by posing a necessary condition for a $D$-antimagic oriented star forest.

\begin{lemma}\label{lem:minD=0}
If an oriented star forest is $D$-antimagic then $\min(D)=0$.
\end{lemma}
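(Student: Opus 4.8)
The plan is to prove the contrapositive: assuming $0 \notin D$, I will show that no orientation of any star forest is $D$-antimagic. Since a star has diameter at most $2$ and vertices in distinct components are mutually unreachable, every finite distance in an oriented star forest lies in $\{0,1,2\}$; hence $\partial \le 2$, and the hypothesis $0 \notin D$ forces $D \in \{\{1\},\{2\},\{1,2\}\}$. My goal is to exhibit two vertices that share the same $D$-weight, and the natural candidate for a repeated value is $0$: any vertex $u$ with $N_D(u)=\emptyset$ satisfies $\omega_D(u)=0$. So it suffices to prove that, whatever the orientation, at least two vertices of the forest have empty $D$-out-neighborhood.

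The core of the argument is a per-component claim: every oriented star in the forest contains at least one vertex whose $\{1\}$- and $\{2\}$-out-neighborhoods are both empty, and therefore (since $D \subseteq \{1,2\}$) whose $D$-out-neighborhood is empty. I would establish this from the neighborhood descriptions recorded at the start of Section~\ref{sec:star}, splitting on whether the star $\overrightarrow{K_{1,n}}$ has a sink leaf. If $t<n$, then every sink leaf $v_i$ (for $t+1 \le i \le n$) has $N_{\{1\}}(v_i)=N_{\{2\}}(v_i)=\emptyset$, so it is a weight-$0$ vertex. If instead $t=n$, there are no sink leaves, but then the center is a sink with $N_{\{1\}}(v)=\emptyset$ (no out-neighbors) and $N_{\{2\}}(v)=\emptyset$ (a center never has a distance-two out-neighbor), so the center itself has weight $0$. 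Either way the component supplies a vertex of $D$-weight $0$, and this conclusion is uniform over all three admissible choices of $D$.

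To finish, I would invoke the definition of a star forest as a disjoint union of at least two stars: applying the per-component claim to two distinct components produces two vertices, lying in different components, each with $D$-weight $0$. Their weights coincide, so the labeling is not $D$-antimagic; since the orientation and the choice of $D$ were arbitrary subject to $0\notin D$, the contrapositive---and hence the lemma---follows. The only real subtlety, and the step I would treat most carefully, is making the per-component claim genuinely orientation-independent: the weight-$0$ vertex is a sink leaf in the generic case but migrates to the (sink) center precisely when $t=n$, and one must verify simultaneously that this vertex has empty neighborhood for each of $D=\{1\}$, $\{2\}$, and $\{1,2\}$ rather than for a single fixed $D$.
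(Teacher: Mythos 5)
Your proof is correct and takes essentially the same approach as the paper: the paper's one-line argument observes that every oriented star has at least one sink, that a sink has $D$-weight zero whenever $\min(D)\ge 1$, and that a forest with at least two components therefore contains two vertices of equal weight. Your case analysis on $t$ simply makes explicit where that sink sits (a sink leaf when $t<n$, the center when $t=n$), so no new idea is involved.
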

\begin{proof}
     Since every star has at least one sink, if $\min(D)\ge 1$, any star forest will have at least two sinks of $D$-weight zero.
\end{proof}

In the next three theorems, we show that the necessary condition in Lemma \ref{lem:minD=0}, together with the condition in Observation \ref{ob:center}, is also sufficient for oriented star forests consisting of isomorphic oriented stars. For $m\ge 2$ and $n\ge 1$, let $m\overrightarrow{K_{1,n}}$ be an oriented star forest consisting of isomorphic oriented stars with a vertex set $\{v^j,v_i^j|1\le j\le m, 1\le i\le n\}$, where $v^j$ is the center of each star component.

\begin{theorem}\label{th:mkn 0,1}
For $m\ge 2$ and $n\ge 1$, in any orientation, $m\overrightarrow{K_{1,n}}$ is $\{0\}$- and $\{0,1\}$-antimagic.
\end{theorem}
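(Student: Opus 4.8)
The $\{0\}$-case is immediate: by Lemma~\ref{lem:all are 0} every oriented graph is $\{0\}$-antimagic, since $\omega_{\{0\}}(u)=g(u)$ and a bijective labeling already yields distinct values. So the content is the $\{0,1\}$-case, which I would treat as a pure construction problem. Because the $m$ components are isomorphic, every copy has the same number $t$ of source-leaves and $s:=n-t$ sink-leaves, and the $\{0,1\}$-weights come in exactly three shapes: a sink $v_i^j$ has $\omega_{\{0,1\}}(v_i^j)=g(v_i^j)$; a source $v_i^j$ has $\omega_{\{0,1\}}(v_i^j)=g(v_i^j)+g(v^j)$; and a center $v^j$ has $\omega_{\{0,1\}}(v^j)=g(v^j)+\sum_{\text{sinks of copy }j} g$. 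I would first record these and note that only the first two depend \emph{linearly} on a single label, whereas the center weight carries a whole block-sum.

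The plan is to first give the $ms$ sink-leaves the smallest labels $1,2,\dots,ms$. Their weights are then exactly $1,\dots,ms$, all distinct and, crucially, strictly below every source weight (a sum of two labels each exceeding $ms$) and every center weight (a label exceeding $ms$ plus a positive sink-sum). This separates the sinks for free and reduces the theorem to a cleaner sub-problem: label the $m$ centers and $mt$ sources with $\{ms+1,\dots,m(n+1)\}$, and choose the distribution of the sink labels among the copies, so that all center and source weights are mutually distinct. For the sources I would mimic the single-star $\{0,1\}$-labeling copy by copy, placing each copy on a block of consecutive labels and using a base-$(n+1)$, unique-representation argument: a source weight of copy $j$ takes the form $c(n+1)+p$ with $c$ determined by $j$ and the position digit $p\in\{1,\dots,n\}$ determined by the leaf, so two source weights coincide only when both the copy index and the leaf position agree. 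This makes the whole source family distinct at once.

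The hard part is the center weights, and this is where I expect the real work. Because $\omega_{\{0,1\}}(v^j)$ contains the sum of the $s$ sink labels of copy $j$, it scales with $s=n-t$ and does not fit the linear-in-$j$ pattern of the sink and source weights, so naive consecutive-block labelings produce collisions, with a center weight landing exactly on a sink or source weight of another copy. My strategy would be to spend the remaining freedom in how the sink labels are split among copies to force each center weight into a \emph{reserved} class avoided by the other two families, for instance choosing the sink positions so that every per-copy sink-sum is $\equiv 0 \pmod{n+1}$; this puts all center weights on multiples of $(n+1)$ while every sink and source weight has a nonzero position digit, so the center weights are automatically separated, and distinct among themselves because the copy blocks are disjoint. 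I expect the genuine obstacles to be precisely the cases where this residue trick degenerates: a single sink per copy ($t=n-1$), the extreme orientations $t=0$ and $t=n$ (the former handled by ordering the copies so the center weights strictly increase, the latter having no sinks at all), and small $n$. Each of these I would settle by a short direct construction, and then assemble the cases into the full statement.
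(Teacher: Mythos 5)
Your opening moves are sound and essentially the paper's own: the $\{0\}$ case is Lemma~\ref{lem:all are 0}, giving the $ms$ sinks (where $s=n-t$) the labels $1,\dots,ms$ separates them by magnitude from every source and center weight, and blockwise labeling of the copies handles source-versus-source distinctness. The genuine gap is in the mechanism you propose for what you yourself call the real work, separating center weights from source weights. Your reserved-class scheme needs two things at once: (i) a partition of $\{1,\dots,ms\}$ into $m$ parts of size $s$ whose sums are all $\equiv 0 \pmod{n+1}$, and (ii) every center label $\equiv 0 \pmod{n+1}$, since the center weight is the center label plus the per-copy sink sum, so (i) alone does not place it on a multiple of $n+1$. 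Both fail far outside your list of degenerate cases. For (i), a necessary condition is $ms(ms+1)/2 \equiv 0 \pmod{n+1}$; take $m=2$, $n=9$, $t=3$: the sink labels $\{1,\dots,12\}$ sum to $78 \equiv 8 \pmod{10}$, so no such partition exists. For (ii), the multiples of $n+1$ not already used on sinks number $m-\lfloor ms/(n+1)\rfloor$, which is less than $m$ whenever $ms\ge n+1$; in the same example only the label $20$ is available for two centers, so the reserved class cannot even be set up. Since $ms\ge n+1$ holds for a broad range of orientations with $1\le t\le n-2$, the "short direct constructions" you defer to would have to carry the bulk of the theorem, and the proof as proposed is incomplete.

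The repair is also simpler than residue arithmetic, and it is what the paper does: separate centers from sources by magnitude rather than by congruence. The paper splits on $t$ ($t=0$; a middle range; $t=n$), gives the centers the largest labels $mn+1,\dots,mn+m$, and uses the fact that a center weight contains the \emph{entire} per-copy sink sum, so when $t\le n-2$ it exceeds every source weight (a sum of just two labels); each family is then internally ordered by an explicit linear formula in $i$ and $j$. Two cautions if you go this route: the borderline orientation $t=n-1$ (a single sink per copy, so the sink sum is no longer large) genuinely resists the magnitude argument --- your instinct to flag it was correct, and in fact the paper's middle-case ordering claim silently breaks there (for $m=2$, $n=5$, $t=4$ its center of copy $1$ and source $v_2^2$ both get weight $20$) --- and $t=1$ must be covered by some case, which the paper's split ($t=0$, $2\le t\le n-1$, $t=n$) also omits. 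So whichever separation mechanism you choose, $t=1$ and $t=n-1$ need honest standalone constructions.
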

\begin{proof} By Lemma \ref{lem:all are 0}, $m\overrightarrow{K_{1,n}}$ is $\{0\}$-antimagic. For $D=\{0,1\}$, consider the following three cases based on the number of source leaves $t$. 

\noindent \textbf{Case 1: $t=0$.}
For all $i$ and $j$, define a vertex labeling $h_1$ where $h_1(v^j)=mn+j$ and $h_1(v_i^j)=n(j-1)+i$. Thus, for $1\le j\le m$, $D$-neighborhood and $D$-weight of each vertex are:
\begin{description}
\item[For the centers:] $N_D(v^j)=\{v_i^j,v^j|1\le i\le n\}$ and $\omega_D(v^j)=(n^2+1)j+\frac{n(2m-n+1)}{2}$.
\item[For the leaves:] $N_D(v_i^j)=\{v_i^j\}$ and $\omega_D(v_i^j)=n(j-1)+i$, for $1\le i\le n$.
\end{description}

All the $D$-weights are different since for $1\le j,j'\le m$ the following holds:
\begin{itemize}
\item $\omega_D(v^j)>\omega_D(v^{j'})$, when $j>j'$.
\item \textbf{For $1\le i,i'\le n$:} $\omega_D(v_i^j)<\omega_D(v^1)$; $\omega_D(v_{i'}^{j'})>\omega_D(v_{i}^{j})$, when $j'>j$ or when $j'=j$ and $i'>i$. 

\end{itemize}

\noindent \textbf{Case 2: $2\le t\le n-1$.}
For all $1\le j\le m$, define a vertex labeling $h_2$ by: $h_2(v^j)=mn+j$ and
$$
h_2(v_i^j)=\begin{cases}
			i+j(t-1), & \text{for } 1 \leq i \leq t,\\
            m(i-1)+j, & \text{for } t+1 \leq i \leq n.
		 \end{cases}
$$
Thus, for $1\le j\le m$, we obtain the following $D$-neighborhoods and $D$-weights:
    \begin{description}
    \item[For the centers:]  $N_D(v^j)=\{v_p^j,v^j| t+1\le p\le n\}$ and $\omega_D(v^j)=j(n-t+1)+\frac{m(n^2-t^2+n+t)}{2}$.
    \item[For the source leaves:]
    $N_D(v_i^j)=\{v_i^j,v^j\}$ and $\omega_D(v_i^j)=mn+jt+i$, for $1\le i\le t$.
    \item[For the sink leaves:]
    $N_D(v_i^j)=\{v_i^j\}$ and $\omega_D(v_i^j)=m(i-1)+j$, for $t+1\le i\le n$.
    \end{description}
Here each $D$-weight is unique, since for $1\le j,j'\le m$, we have:
    \begin{itemize}
    \item $\omega_D(v^j)>\omega_D(v^{j'})$, when $j>j'$.
    \item \textbf{For $1\le i,i'\le t$:} $\omega_D(v_{i}^j)<\omega_D(v^1)$;  $\omega_D(v_{i'}^{j'})>\omega_D(v_{i}^{j})$, when $j'>j$ or when $j'=j$ and $i'>i$. 
    \item \textbf{For $t+1\le i,i'\le n$:} $\omega_D(v_{i}^j)<\omega_D(v^1)$; $\omega_D(v_{i'}^{j'})>\omega_D(v_{i}^{j})$, when $j'>j$ or when $j'=j$ and $i'>i$. 
    \end{itemize}
    
\noindent \textbf{Case 3: $t=n$.} For all $i$ and $j$, define a vertex labeling $h_3$ by  $h_3(v^j)=j$ and $h_3(v_i^j)=mi+j$. Thus, for $1\le j\le m$, we obtain the distinct $D$-weights as follows:
\begin{description}
\item[For the centers:]  $N_D(v^j)=\{v^j\}$ and $\omega_D(v^j)=j$.
\item[For the source leaves:] $N_D(v_i^j)=\{v_i^j,v^j\}$ and $\omega_D(v_i^j)=mi+2j$, where $\omega_D(v_i^j)<\omega_D(v_{i'}^{j'})$ when $i<i'$ or when $j<j'$ and $i=i'$, for $1\le i\le n$.
\end{description} 

The three cases complete the proof and refer to Figure \ref{fig:D0,1 mkn} for an example.
\end{proof}

    \begin{figure}[ht]
    \vspace{-0.5em}
        \centering        \includegraphics[width=1\linewidth]{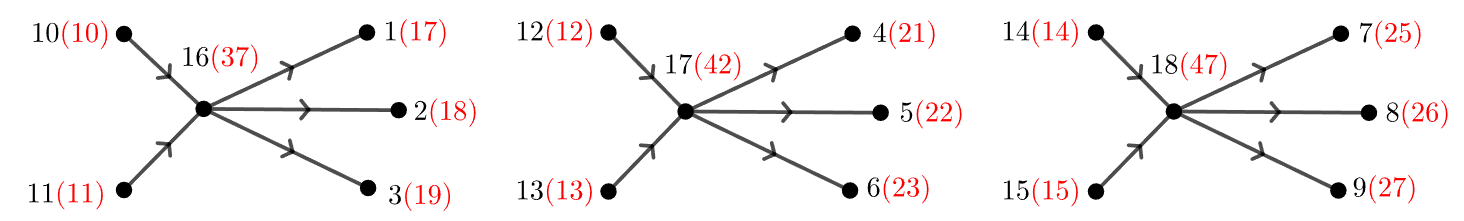}
        \caption{A $\{0,1\}$-antimagic labeling of a $3\overrightarrow{K_{1,5}}$.}
        \label{fig:D0,1 mkn}
    \end{figure}
    
\begin{theorem}\label{th:mkn 0,2 dan 0,1,2}
For $m\ge 2$ and $n\ge 1$, $m\overrightarrow{K_{1,n}}$ is $\{0,2\}$- and $\{0,1,2\}$-antimagic if and only if each center is neither a sink nor a source.
\end{theorem}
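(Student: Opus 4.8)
The plan is to prove necessity by a distance-set argument and sufficiency by exhibiting a single explicit bijection that simultaneously certifies both the $\{0,2\}$- and $\{0,1,2\}$-antimagic properties.

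\textbf{Necessity.} Suppose $m\overrightarrow{K_{1,n}}$ is $\{0,2\}$- (resp.\ $\{0,1,2\}$-) antimagic. If some center were a source or a sink, then, since the $m$ components are isomorphic, every center would be of that same type, and no two vertices of $m\overrightarrow{K_{1,n}}$ would lie at distance $2$, i.e.\ $\partial=1$. Then $\{0,2\}\not\subseteq\{0,1,\dots,\partial\}$ (and likewise $\{0,1,2\}$), so these are not valid distance sets and the forest cannot be $\{0,2\}$- or $\{0,1,2\}$-antimagic. By Observation~\ref{ob:center}, distance $2$ is realizable exactly when each center is neither a source nor a sink, which forces $1\le t\le n-1$ and hence $n\ge 2$; this is the regime in which the construction must be produced.

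\textbf{Sufficiency, setup.} Writing $\ell$ for the labeling and $S_j:=\sum_{t+1\le p\le n}\ell(v_p^j)$ for the sink-label sum of component $j$, the weight formulas are, for $D=\{0,2\}$: center weight $\ell(v^j)$, sink weight $\ell(v_i^j)$, source weight $\ell(v_i^j)+S_j$; and for $D=\{0,1,2\}$: center weight $\ell(v^j)+S_j$, sink weight $\ell(v_i^j)$, source weight $\ell(v_i^j)+\ell(v^j)+S_j$. The aim is a single labeling placing the sink, center, and source weights in three disjoint value-ranges, with injectivity inside each. I would use the three-tier choice $\ell(v_i^j)=m(i-t-1)+j$ for sink leaves (the smallest $m(n-t)$ labels, row-wise), $\ell(v^j)=m(n-t)+j$ for centers (the next $m$ labels), and $\ell(v_i^j)=m(n-t+1)+(j-1)t+i$ for source leaves (the largest $mt$ labels, in per-component blocks ordered by $i$). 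The row-wise sink labeling gives $S_j=C+(n-t)j$ with $C=m\binom{n-t}{2}$, linear in $j$, which is the feature that drives the whole argument.

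\textbf{Main obstacle and range check.} The hard part is the pairwise distinctness of the source-leaf weights across distinct components, since these mix a source label with the component-dependent quantity $S_j$; this is exactly where the per-component block assignment pays off. Substituting the labels, the source weight collapses to $K+i+nj$ for $D=\{0,2\}$ and to $K'+i+(n+1)j$ for $D=\{0,1,2\}$, where $K,K'$ are constants (the cross term $(j-1)t$ from the source block merges with $(n-t)j$ from $S_j$). Because $1\le i\le t\le n-1$ we have $i<n\le n+1$, so the maps $(i,j)\mapsto i+nj$ and $(i,j)\mapsto i+(n+1)j$ are injective on $\{1,\dots,t\}\times\{1,\dots,m\}$: the quotient recovers $j$ and the remainder recovers $i$. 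It then remains to verify that the three classes occupy disjoint ranges. The sink weights lie in $[1,m(n-t)]$ and the center weights just above (in both $D$'s the center weights are increasing in $j$ and exceed $m(n-t)$), while a one-line estimate shows the minimum source weight exceeds the maximum center weight (the gap equals $n-t+2>0$ in the $\{0,1,2\}$ case, and is positive likewise for $\{0,2\}$). Combining the block-injectivity with the disjoint-range ordering yields distinct weights at every vertex for both distance sets, so the same labeling settles both statements; I would finish by exhibiting a small worked example to illustrate it.
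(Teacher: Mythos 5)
Your proposal is correct and takes essentially the same approach as the paper: your labeling coincides exactly with the paper's bijection $g_2$ (smallest labels on sink leaves row-wise, the next $m$ labels on centers, the largest $mt$ labels on source leaves in per-component blocks), and your verification via disjoint ranges for sink, center, and source weights mirrors the paper's ordering argument, with the same implicit use of Observation~\ref{ob:center} for necessity. Your explicit computation of $S_j$, the injectivity of $(i,j)\mapsto i+nj$, and the gap estimate $n-t+2$ merely make the paper's ordering claims more transparent.
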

\begin{proof} For $1\le j\le m$, define a bijection $g_2$ by: $g_2(v^j)=m(n-t)+j$ and
$$
g_2(v_i^j)=\begin{cases}
			m(n-t+1)+t(j-1)+i, & \text{for } 1 \leq i \leq t,\\
            m(i-t-1)+j, & \text{for } t+1 \leq i \leq n.
		 \end{cases}
$$
Since $2\in D$ then $1\leq t\leq n-1$. For $1\le j\le m$, we shall determine the $D$-neighborhoods and $D$-weights of each vertex. For $D=\{0,2\}$, we obtain:
\begin{itemize}
\item \textbf{For the centers:} $N_D(v^j)=\{v^j\}$ and $\omega_D(v^j)=m(n-t)+j$.
\item \textbf{For the source leaves:} $N_D(v_i^j)=\{v_p^j|t+1\le p\le n\text{ or } p=i\}$ and $\omega_D(v_i^j)=nj+i-t+\frac{m(n-t)(n-t+1)}{2}$, for $1\le i\le t$.
\item \textbf{For the sink leaves:} $N_D(v_i^j)=\{v_i^j\}$ and $\omega_D(v_i^j)=m(i-t-1)+j$, for $t+1\le i\le n$.
\end{itemize}
While for $D=\{0,1,2\}$:
\begin{itemize}
\item \textbf{For the centers:} $N_D(v^j)=\{v^j,v_p^j|t+1\le p\le n\}$ and $\omega_D(v^j)=(n-t)\left(j+\frac{m(n-t+1)}{2}\right)+j$.
\item \textbf{For the source leaves:} $N_D(v_i^j)=\{v^j\}\cup\{v_p^j|t+1\le p\le n\text{ or } p=i\}$ and $\omega_D(v_i^j)=i+j+m++t(j-1)+(n-t)\left(j+\frac{m}{2}(n-t+3)\right)$, for $1\le i\le t$.
\item \textbf{For the sink leaves:} $N_D(v_i^j)=\{v_i^j\}$ and $\omega_D(v_i^j)=m(i-t-1)+j$, for $t+1\le i\le n$.
\end{itemize}
To complete the proof, for $1\le j\le m$, we order all $D$-weights for both $D$s:
\begin{itemize}
\item $\omega_D(v^j)<\omega_D(v^{j'})$, when $j<j'$.
\item \textbf{For $1\le i\le t$:} $\omega_D(v_i^j)>\omega_D(v^m)$; $\omega_D(v_i^j)<\omega_D(v_{i'}^{j'})$, when $j<j'$ or when $j'=j$ and $i<i'$.
\item \textbf{For $t+1\le i\le n$:}  $\omega_D(v_i^j)<\omega_D(v^1)$; $\omega_D(v_i^j)<\omega_D(v_{i'}^{j'})$, when $i<i'$ or when $i'=i$ and $j<j'$. 
\end{itemize}

An example of these labelings is provided in Figure \ref{fig:D=0,2 and 0,1,2 of mK1,5}. \end{proof}

\begin{figure}[ht]
    \centering    \includegraphics[width=1\linewidth]{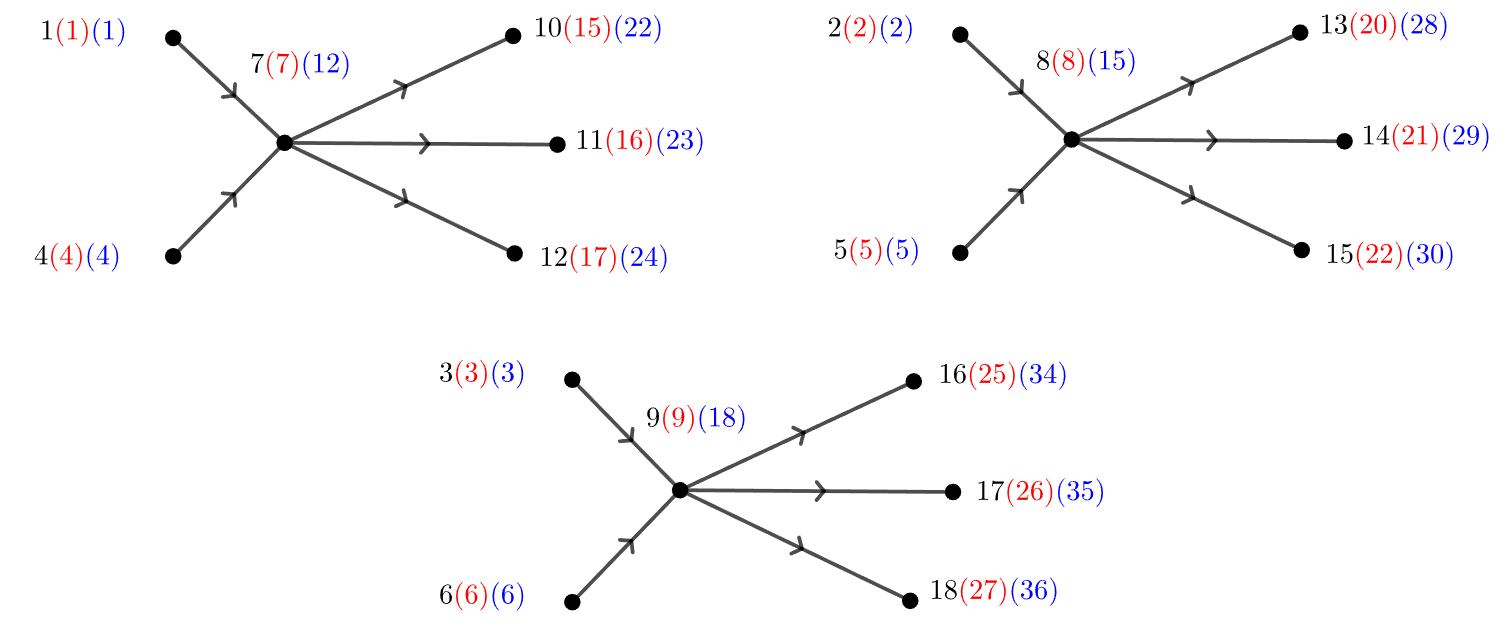}
    \caption{$\{0,2\}$- and $\{0,1,2\}$-antimagic labelings of a $3\overrightarrow{K_{1,5}}$. (The black numbers are the labels, the \textcolor{red}{red numbers} are the $\{0,2\}$-weights and the \textcolor{blue}{blue numbers} are the $\{0,1,2\}$-weights of each vertex.)}
    \label{fig:D=0,2 and 0,1,2 of mK1,5}
\end{figure}

We conclude by exploring $D$-antimagic labelings of a star forest consisting of at least two (not necessarily isomorphic) oriented stars. For $\tau \geq 1$, $1 \leq j \leq \tau$, $1\leq n_1 < n_2 < \dots < n_{j-1} < n_j$, $m_j \geq 1$, and $\sum_{j=1}^{\tau}m_j\ge 2$, let $S=\bigcup_{j=1}^{\tau}\overrightarrow{m_jK_{1,n_j}}$ be an oriented star forest with vertex set $\{v_{i,s}^j,v_s^j|1\le j\le \tau, 1\le s\le m_j, 1\le i\le n_j\}$, where $v_s^j$ is the center of each star. 

\begin{theorem}
There exists an orientation of the star forest $S$ such that $S$ is $\{0,1\}$-, $\{0,2\}$-, and $\{0,1,2\}$-antimagic.
\end{theorem}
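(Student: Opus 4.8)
The plan is to produce a single orientation of $S$ and then, for each of the three admissible distance sets, a (possibly different) labeling. The orientation I would use gives every star with $n_j\ge 2$ exactly one sink leaf and $n_j-1$ source leaves (so $t_j=n_j-1$), and orients each $K_{1,1}$ component, if any, from its center to its leaf. Two features of this choice are what make the whole argument go through. First, since $n_\tau\ge 2$ there is a directed path source-leaf $\to$ center $\to$ sink-leaf, so $\partial=2$ and each of $\{0,1\}$, $\{0,2\}$, $\{0,1,2\}$ is admissible (this also respects Observation~\ref{ob:center}, since every center of a star with $n_j\ge 2$ is then neither a source nor a sink). Second, and crucially, each star now has a \emph{single} sink leaf, so the quantity ``sum of the sink-leaf labels of a star'' --- which is exactly what inflates the $D$-weight of a center and of a source leaf --- collapses to one label; this is what lets me separate the weights of stars of very different sizes $n_j$.

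The backbone of all three constructions is the observation (forced by $\min(D)=0$, cf.\ Lemma~\ref{lem:minD=0}) that every $D$-weight contains the vertex's own label. I would therefore split $V(S)$ into \emph{own-label} vertices, whose $D$-weight equals their own label and which are thus automatically pairwise distinct, and \emph{sum} vertices, whose $D$-weight is their own label plus one or two neighbouring labels. It then suffices to make all sum-weights mutually distinct and strictly larger than every own-label weight. To do this I fix a global indexing $g=1,\dots,M$ of the $M=\sum_j m_j$ stars and use three nested label-blocks ordered sinks $<$ centers $<$ source leaves: the unique sink/leaf of star $g$ receives label $g$, its center receives label $M+g$, and the source leaves (which only the stars with $n_j\ge 2$ possess) receive the top $N-2M$ labels, handed out in increasing order as the source leaves are listed by increasing $g$. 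The $K_{1,1}$ components then need no special treatment: their leaf plays the role of a sink and their center the role of a center, and they simply contribute no source leaf.

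It remains to read off the weights in each case and check the ordering. For $D=\{0,2\}$ the only sum vertices are the source leaves, each of weight (own label)$+g$; centers and sinks are own-label, so their weights fill $\{1,\dots,2M\}$ injectively. For $D=\{0,1\}$ and $D=\{0,1,2\}$ the centers also become sum vertices of weight $(M+g)+g=M+2g$, a strictly increasing sequence lying just above the own-label range; and a source leaf picks up its center's label (and, for $\{0,1,2\}$, also the sink label), so its weight is (source label)$+M+g$ or (source label)$+M+2g$. In every case distinctness follows from two remarks: within the source family the weight strictly increases along the global listing, because the assigned source label increases while the per-star offset $g$ (resp.\ $2g$) is non-decreasing; and the smallest source-weight exceeds the largest non-source weight, since the source labels occupy the top block $\{2M+1,\dots,N\}$ and the extra small summands keep the inequality strict. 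The centers, being of the form $M+2g$ (or own-labels in the $\{0,2\}$ case), are distinct among themselves and lie above the sinks. This mirrors the orderings verified in Theorems~\ref{th:mkn 0,1} and~\ref{th:mkn 0,2 dan 0,1,2}.

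The step I expect to be the genuine obstacle is guaranteeing distinctness \emph{across} the two families (centers versus source leaves) and \emph{across} stars of different sizes, since larger stars a priori generate larger weights. The one-sink orientation is precisely the device that defuses this: by bounding each star's sink contribution by a single label it keeps all center-weights at most $3M$ regardless of the $n_j$, so a clean block separation pushes every source-weight above them, while the increasing assignment inside the source block rules out the ``off-by-a-small-multiple-of-$g$'' coincidences that would otherwise arise. The remaining work --- pinning down the block boundaries so the governing inequalities hold and checking the increments at star boundaries --- is routine bookkeeping of the same flavour as the earlier theorems. I would finish by noting the degenerate case $\tau=1,\ n_1=1$ (only copies of $K_{1,1}$): there $\partial=1$, the sets $\{0,2\}$ and $\{0,1,2\}$ are inadmissible, and only the $\{0,1\}$ statement survives, which is already covered by Theorem~\ref{th:mkn 0,1}.
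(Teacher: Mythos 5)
Your proposal is correct and is essentially the paper's own proof: your one-sink orientation is exactly the paper's $\Pi$ orientation, and your three-block labeling (sinks get $1,\dots,M$ by global star index, centers get $M+1,\dots,2M$, source leaves get the top block in increasing order) coincides with the paper's labeling $h$, with the same weight computations and block-separation ordering argument. Your explicit handling of the degenerate all-$K_{1,1}$ case is a small refinement the paper leaves implicit, but otherwise the two arguments are the same.
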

\begin{proof} We define a $\Pi$ orientation of $S$ as the orientation with the arc set $\{(v_{i,s}^j,v_s^j),(v_s^j,v_{n_j,s}^j)\mid 1\le j\le \tau, 1\le i\le n_j-1\}$. For $1\le j\le \tau$, $1\le s\le m_j$, and $m_0=n_0=0$, we also define a bijective vertex labeling $h$ by:
\begin{align*}
h(v_s^j)=&\sum_{p=1}^{\tau}m_p+\sum_{p=0}^{j-1}m_p+s,\\
h(v_{i,s}^j)=&2\sum_{p=1}^{\tau}m_j+\sum_{p=0}^{j-1}m_p(n_p-1)+(s-1)(n_j-1)+i, \text{ for } 1\le i\le n_j-1, \text{ and}\\
h(v_{n_j,s}^j)=&\sum_{p=0}^{j-1}m_p+s.
\end{align*}

For $1\le j\le \tau$ and $1\le s\le m_j$, we shall determine $D$-neighborhoods and $D$-weights for each vertex separately for each distance set $D$.

\noindent \textbf{Case 1: $D=\{0,1\}$.}
\begin{itemize}
    \item \textbf{For $1\le i\le n_j-1$:} $N_D(v_{i,s}^j)=\{v_{i,s}^j,v_s^j\}$ and $$\omega_D(v_{i,s}^j)=3\sum_{p=1}^{\tau}m_j+\sum_{p=0}^{j-1}m_p n_p+(s-1)n_j+i+1.$$
    \item $N_D(v_s^j)=\{v_s^j,v_{n_j,s}^j\}$ and $\omega_D(v_s^j)=\sum_{p=1}^{\tau}m_p+2\left(\sum_{p=0}^{j-1}m_p+s\right)$.
    \item $N_D(v_{n_j,s}^j)=\{v_{n_j,s}^j\}$ and $\omega_D(v_{n_j,s}^j)=\sum_{p=0}^{j-1}m_p+s$.
\end{itemize}

\noindent \textbf{Case 2: $D=\{0,2\}$.}
    \begin{itemize}
    \item \textbf{For $1\le i\le n_j-1$:} $N_D(v_{i,s}^j)=\{v_{i,s}^j,v_{n_j,s}^j\}$ and $$\omega_D(v_{i,s}^j)=2\sum_{p=1}^{\tau}m_j+\sum_{p=0}^{j-1}m_p n_p+(s-1)n_j+i+1.$$
    \item $N_D(v_s^j)=\{v_{s}^j\}$ and $\omega_D(v_s^j)=\sum_{p=1}^{\tau}m_p+\sum_{p=0}^{j-1}m_p+s$.
    \item $N_D(v_{n_j,s}^j)=\{v_{n_j,s}^j\}$ and $\omega_D(v_{n_j,s}^j)=\sum_{p=0}^{j-1}m_p+s$.
\end{itemize}

\noindent    \textbf{Case 3: $D=\{0,1,2\}$.}
    \begin{itemize}
    \item \textbf{For $1\le i\le n_j-1$:} $N_D(v_{i,s}^j)=\{v_{i,s}^j,v_s^j,v_{n_j,s}^j\}$ and $$\omega_D(v_{i,s}^j)=3\sum_{p=1}^{\tau}m_j+\sum_{p=0}^{j-1}m_p(n_p+1)+(s-1)(n_j+1)+i+2.$$
    \item $N_D(v_s^j)=\{v_s^j,v_{n_j,s}^j\}$ and $\omega_D(v_s^j)=\sum_{p=1}^{\tau}m_p+2\left(\sum_{p=0}^{j-1}m_p+s\right)$.
    \item $N_D(v_{n_j,s}^j)=\{v_{n_j,s}^j\}$ and $\omega_D(v_{n_j,s}^j)=\sum_{p=0}^{j-1}m_p+s$.
\end{itemize}

For $1\le j\le \tau$, $1\le s\le m_j$, and 
all three possible $D$s, it can be verified that:
\begin{description}
    \item[For the centers:] $\omega_D(v_s^j)<\omega_D(v_{s'}^{j'})$, when $j>j'$ or when $j=j'$ and $s'>s$. 
    Also, $\omega_D(v_{m_t}^t)<\omega_D(v_{1,1}^1)$.
    \item[For $1\le i\le n_j-1$:] $\omega_D(v_{i,s}^j)<\omega_D(v_{i',s'}^{j'})$, when $j>j'$ or when $j=j'$ and $s'>s$ or when $j=j', s=s'$, and $i'>i$. 
    \item[For $i=n_j$:]  $\omega_D(v_{n_j,s}^j)<\omega_D(v_{n_j,s'}^{j'})$, $j>j'$ or when $j=j'$ and $s'>s$. Also, $\omega_D(v_{n_t,m_t}^t)<\omega_D(v_1^1)$.
    \end{description}
    
Therefore, in orientation $\Pi$, $S$ is $\{0,1\}$-,$\{0,2\}$-, and $\{0,1,2\}$-antimagic. Figure \ref{fig:S} provides an example of these labelings.
\end{proof}

\begin{figure}[ht]
    \centering
    \includegraphics[width=1\linewidth]{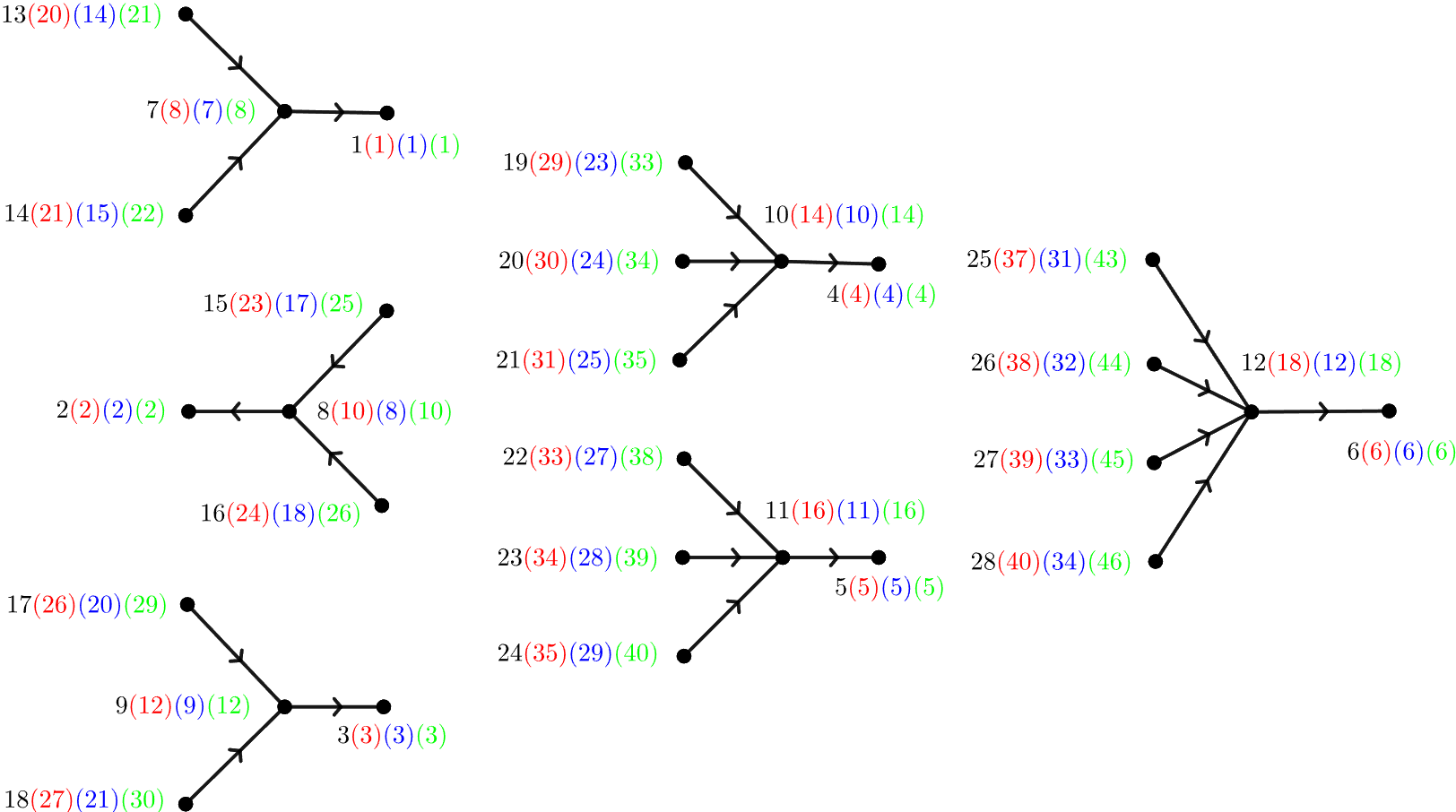}
    \caption{$\{0,1\}$-, $\{0,2\}$-, and $\{0,1,2\}$-antimagic labelings of a $\Pi$-oriented $\overrightarrow{3K_{1,3}}\cup \overrightarrow{2K_{1,4}}\cup \overrightarrow{K_{1,5}}$. (The black numbers are the labels, the \textcolor{red}{red numbers} are the $\{0,1\}$-weights, the \textcolor{blue}{blue numbers} are the $\{0,2\}$-weights, and the \textcolor{green}{green numbers} are the $\{0,1,2\}$-weights of each vertex.)}
    \label{fig:S}
\end{figure}

\begin{figure}[ht]
    \centering    \includegraphics[width=1\linewidth]{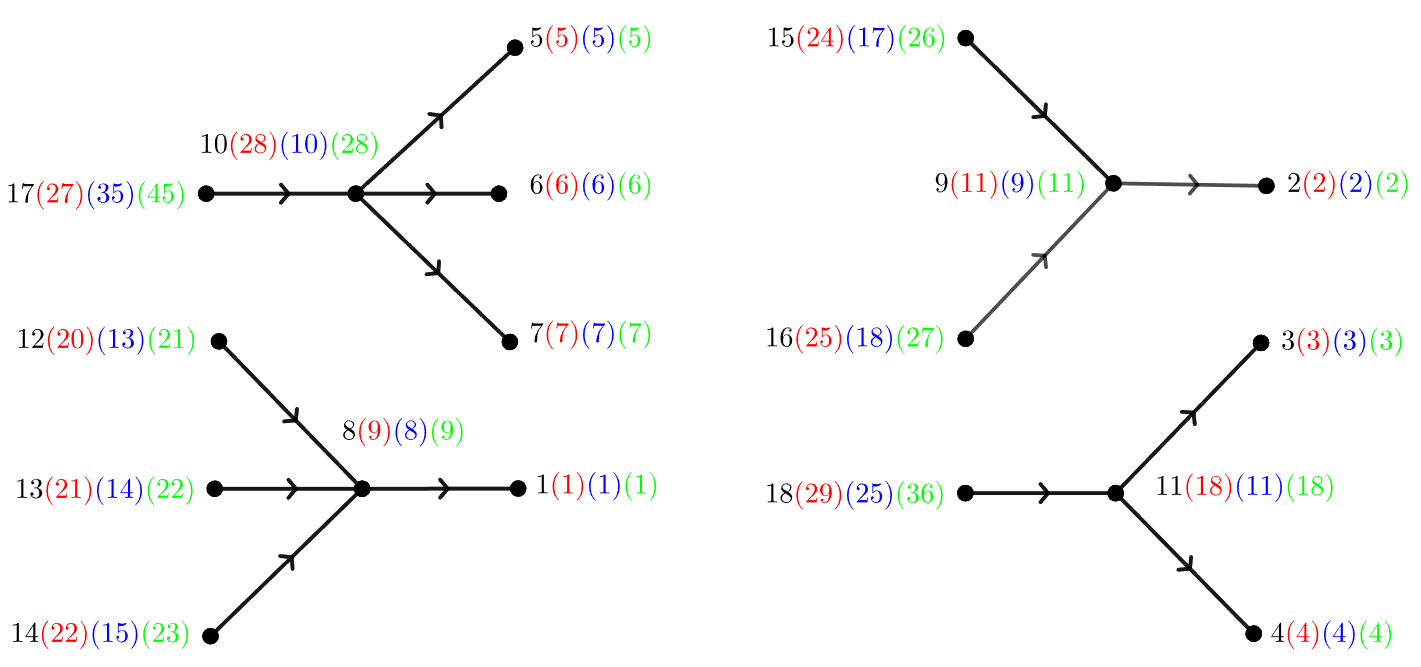}
    \caption{$\{0,1\}$-, $\{0,2\}$-, and $\{0,1,2\}$-antimagic labelings of a $\overrightarrow{2K_{1,3}}\cup \overrightarrow{2K_{1,4}}$. (The black numbers are the labels, the \textcolor{red}{red numbers} are the $\{0,1\}$-weights, the \textcolor{blue}{blue numbers} are the $\{0,2\}$-weights, and the \textcolor{green}{green numbers} are the $\{0,1,2\}$-weights of each vertex.)}
    \label{fig:S bukan Pi}
\end{figure}
In Figure \ref{fig:S bukan Pi} we have an example of an oriented star forest, with orientation other than $\Pi$, that is $D$-antimagic. Thus, we conclude by posing a general open problem for oriented star forests.
\begin{problem}
List all orientations of a star forest
$S$, such that it is $\{0,1\}$-, $\{0,2\}$-, and $\{0,1,2\}$-antimagic.
\end{problem}

\end{document}